\def\nn{\nonumber}
\def\div{ \hbox{\rm div}\,  }
\theoremstyle{plain}
\newtheorem{theorem}{Theorem}[section]
\newtheorem{lemma}[theorem]{Lemma}
\newtheorem{remark}[theorem]{Remark}
\numberwithin{equation}{section}
\newcommand{\norma}[2]{\left\lVert #1 \right\rVert_{#2}}
\newcommand{\ta}{\theta}
\newcommand{\Ga}{\Gamma}
\newcommand{\dt}{\partial_t}
\newcommand{\dx}{\partial_x}
\newcommand{\dy}{\partial_y}
\newcommand{\p}{{\mathbb P}}
\newcommand{\q}{{\mathbb Q}}
\newcommand{\jap}[1]{\left\langle #1 \right\rangle}
\newcommand{\norm}[1]{\left\lVert #1 \right\rVert}
\newcommand{\R}{\mathbb{R}}
\newcommand{\T}{\mathbb{T}}
\begin{document}

\title[stability analysis of the compressible Euler equations ]{Linear stability analysis of the  Couette flow for the two dimensional  non-isentropic compressible Euler equations}

\author[Xiaoping Zhai]{ Xiaoping Zhai}

\address{ School  of Mathematics and Statistics, Shenzhen University,
 Shenzhen, 518060, China}
\email{zhaixp@szu.edu.cn}

\begin{abstract}
This note is devoted to the linear stability  of the  Couette flow for the   non-isentropic compressible Euler equations in a domain $\mathbb{T}\times \mathbb{R}$. Exploiting
the several conservation laws originated from  the special structure of the linear system,
we obtain a Lyapunov type instability for the  density, the temperature, the compressible part of the velocity field, and also obtain  an inviscid damping for the incompressible part of the velocity field.
	\end{abstract}
\maketitle

\section{ Introduction and the main result}
Inspired by \cite{dole} and \cite{doln}, in the present paper, we are interested in the long-time asymptotic behaviour of the linearized two dimensional  non-isentropic compressible Euler equations in a domain $\mathbb{T}\times \mathbb{R}$. The governing equations (in non-dimensional variables) are
\begin{align}\label{m1}
\frac{\partial{\varrho}}{\partial  t}+{\mathbf{u}}\cdot\nabla {\varrho}+{\varrho}\div{\mathbf{u}}=0,
\end{align}
\begin{align}\label{m2}
{{\varrho}}\left[\frac{\partial{\mathbf{u}}}{\partial  t}+{\mathbf{u}}\cdot\nabla {\mathbf{u}}\right]
=-\frac{\nabla P}{\gamma  {M}^2} ,
\end{align}
\begin{align}\label{m3}
P={\varrho}\vartheta,
\end{align}
\begin{align}\label{m4}
{{\varrho}}\left[\frac{\partial\vartheta}{\partial  t}+{\mathbf{u}}\cdot\nabla \vartheta\right]
=- (\gamma-1)P\div {\mathbf{u}},
\end{align}
where {for }  $(x,y)\in\mathbb{T}\times \mathbb{R}$ and $\mathbb{T}=\mathbb{R}/\mathbb{Z}$,
 ${\mathbf{u}}$ is the velocity vector, ${{\varrho}}$ is the density, $P$ is the pressure, $\vartheta$ is the
temperature, $\gamma$ is the ratio of specific heats, and $M$ is the Mach number of the reference state.
The question of stability of Couette flows has a long history, see \cite{dole}, \cite{doln}, \cite{Q12+1}, \cite{Q13}, \cite{Q14}, \cite{Q17},  \cite{Q18}, \cite{Q19},  \cite{Q21} for the compressible fluids and see \cite{Q1}, \cite{Q11},  \cite{Q12}, \cite{Q15}, \cite{Q16}, \cite{Q20}, \cite{Q22} for the  incompressible fluids.

The Couette flow,
\begin{align}\label{}
{\mathbf{u}}_{sh}=\begin{pmatrix}
				y\\ 0
			\end{pmatrix},\quad\varrho_{sh}=1,\quad \vartheta_{sh}=1,
\end{align}
is clearly a stationary solution of \eqref{m1}--\eqref{m4}.
 Our goal is to understand the stability and large-time behavior of perturbations near the Couette flow.

Denote
\begin{equation*}
		{\rho}={\varrho}-\varrho_{sh},  \qquad{\ta}=\vartheta-\vartheta_{sh},\qquad
		{\mathbf{v}}={\mathbf{u}}-{\mathbf{u}}_{sh}.
	\end{equation*}
The linearized system around the homogeneous Couette flow read as follows
\begin{eqnarray}\label{m5}
\left\{\begin{aligned}
&\dt {\rho} +y\dx{\rho}+\div {\mathbf{v}}=0,\\
&\dt {\mathbf{v}}+y\dx {\mathbf{v}}+\begin{pmatrix}
				v^y\\ 0
			\end{pmatrix}+\frac{1}{\gamma {M}^2} (\nabla{\rho}+\nabla{\ta})=0,\\
&\dt {\ta}+y\dx {\ta}+(\gamma-1)\div{\mathbf{v}}=0.
\end{aligned}\right.
\end{eqnarray}
Define
\begin{equation*}
		\alpha=\div {\mathbf{v}}, \qquad \omega=\nabla^\perp\cdot {\mathbf{v}}, \quad \hbox{with\quad $\nabla^\perp=(-\dy,\dx)^T$},
	\end{equation*}
according to
	the Helmholtz projection operators,  we have
	\begin{align}\label{m6}
{\mathbf{v}}=(v^x,v^y)^T\stackrel{\mathrm{def}}{=}\p [{\mathbf{v}}]+\q [{\mathbf{v}}]
\end{align}
 with
 \begin{align}\label{m7}
\p [{\mathbf{v}}]\stackrel{\mathrm{def}}{=}\nabla^\perp \Delta^{-1}\omega, \qquad\q [{\mathbf{v}}]\stackrel{\mathrm{def}}{=}\nabla \Delta^{-1}\alpha.
\end{align}
From the above definition, one can infer that
\begin{equation}\label{m9}
		v^y=\dy(\Delta^{-1})\alpha+\dx(\Delta^{-1})\omega,
	\end{equation}
hence, we  can rewrite \eqref{m5}  in terms of $(\rho,\alpha,\omega,\ta)$ that
\begin{eqnarray}\label{m10}
\left\{\begin{aligned}
&\dt \rho +y\dx\rho+\alpha=0,\\
&\dt \alpha+y\dx \alpha+2\dx (\dy(\Delta^{-1})\alpha+\dx(\Delta^{-1})\omega)+\frac{1}{\gamma{M}^2}(\Delta\rho+\Delta\theta)=0,\\
&	\dt \omega+y\dx \omega-\alpha=0,\\	
&\dt \ta +y\dx\ta+(\gamma-1)\alpha=0.
\end{aligned}\right.
\end{eqnarray}
Obviously, the above system \eqref{m10} is a closed system regarding of $(\rho,\alpha,\omega,\ta)$.

Now, we can state the main result of the present paper.

 \begin{theorem}\label{dingli}
		      	 Let $(\rho^{in}, \ \omega^{in}, \ \theta^{in})\in H^1_x(\T)H^2_y(\R)$ and $\alpha^{in}\in H^{-\frac{1}{2}}_x(\T)L^2_y(\R)$ be the initial data of \eqref{m10} with
$$\int_\mathbb{T} \rho_{in} \,dx=\int_\mathbb{T} \omega_{in} \,dx=\int_\mathbb{T} \alpha_{in} \,dx=\int_\mathbb{T} \ta_{in} \,dx=0.$$
 Then
		\begin{align}\label{you17}
			\norma{\p[{\mathbf{v}}]^x(t)}{L^2}\le&\frac{C{M}}{\langle t\rangle^{\frac{1}{2}}}\bigg(\norma{ \frac{\rho^{in}+\ta^{in}}{\gamma{M}}}{H^{-\frac12}_xL^2_y}+\norma{\alpha^{in}}{H^{-\frac12}_xH^{-1}_y} 
+\norma{ \frac{\rho^{in}+\ta^{in}+\gamma\omega^{in}}{\gamma}}{H^{-\frac12}_xH^{\frac12}_y}\bigg)\nonumber\\
			&+\frac{C}{\langle t\rangle}\norma{\frac{\rho^{in}+\ta^{in}+\gamma\omega^{in}}{\gamma}}{H^{-1}_xH^1_y},
		\end{align}		
\begin{align}\label{you18}
			\norma{\p[{\mathbf{v}}]^y(t)}{L^2}
			\le&\frac{C{M}}{\langle t\rangle^{\frac32}}\bigg(\norma{ \frac{\rho^{in}+\ta^{in}}{\gamma{M}}}{H^{-\frac12}_xH^{1}_y}+\norma{\alpha^{in}}{H^{-\frac12}_xL^2_y}+\norma{\frac{\rho^{in}+\ta^{in}+\gamma\omega^{in}}{\gamma}}{H^{-\frac12}_xH^{\frac{3}{2}}_y}\bigg)\nonumber\\
			&+\frac{C}{\langle t\rangle^2}\norma{\frac{\rho^{in}+\ta^{in}+\gamma\omega^{in}}{\gamma}}{H^{-1}_xH^2_y}.
		\end{align}
	Moreover,
      	\begin{align}\label{you16}	&\norma{Q[{\mathbf{v}}](t)}{L^2}+\frac{\gamma}{{M}}\norma{\rho(t)}{L^2}+\frac{\gamma}{{M}}\norma{\ta(t)}{L^2}\nonumber\\
&\quad\le C \jap{t}^\frac12\Bigg({(\gamma-1)}\left\|\frac{(\gamma-1)\rho^{in}-\ta^{in}}{{M}}\right\|_{{L^2}}\nonumber\\
&\qquad\qquad+\left(\norm{\frac{\rho^{in}+\ta^{in}}{\gamma M}}_{L^2}+\norm{\alpha^{in}}_{H^{-1}}
+\norm{\frac{\rho^{in}+\ta^{in}+\gamma\omega^{in}}{\gamma}}_{H^{\frac12}}\right)
\Bigg).
			\end{align}
\begin{remark}
In a forthcoming paper, we consider the linear stability  of the  Couette flow for the non-isentropic compressible Euler equations in three dimensional.
\end{remark}		     
 \end{theorem}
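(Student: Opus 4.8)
The plan is to pass to the Fourier side in $x$ (and eventually in $y$), exploit the explicit conservation laws hidden in the system \eqref{m10}, and then track the time decay/growth of the relevant quantities via stationary-phase-type estimates in the $y$ (or dual) variable. First I would introduce the characteristic coordinates that remove the transport term $y\dx$: set $\tilde f(t,x,y)=f(t,x+ty,y)$, or equivalently work with the Fourier variables $(k,\eta)$ and replace $\eta$ by $\eta+kt$. In these variables the first, third and fourth equations of \eqref{m10} become $\dt\rho=-\alpha$, $\dt\omega=\alpha$, $\dt\ta=-(\gamma-1)\alpha$ (after conjugating by the flow), which immediately yield the pointwise-in-$(k,\eta)$ conserved combinations $\rho+\omega$, $(\gamma-1)\rho-\ta$, and $\rho+\ta+\gamma\omega$ up to the action of the shear; these are exactly the combinations appearing on the right-hand sides of \eqref{you16}–\eqref{you18}, which tells me the bookkeeping is being driven by these invariants. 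I would write down precisely which linear combination is conserved and which satisfies a closed scalar equation forced by $\alpha$.

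Next I would derive the scalar evolution for $\alpha$ alone. Using the conservation laws to eliminate $\rho$, $\ta$, $\omega$ in the $\alpha$-equation, one gets a second-order-in-time ODE (for each fixed frequency $k\neq 0$) of Klein–Gordon/Schrödinger type with time-dependent coefficients coming from the symbol of $\dx\dy\Delta^{-1}$ and $\Delta$ evaluated along the moving frequency $\eta\mapsto\eta+kt$; concretely the "potential" involves $\frac{k^2}{k^2+(\eta+kt)^2}$ and $\frac{k(\eta+kt)}{k^2+(\eta+kt)^2}$ and the constant $\frac{1}{\gamma M^2}(k^2+(\eta+kt)^2)$. I would then run an energy estimate for this ODE with a carefully chosen time-dependent weight (a standard "good unknown"/symmetrization trick, as in the Antonelli–Dolce–Marcati and Bianchini–Coti Zelati–Dolce circle of ideas cited as \cite{dole}, \cite{doln}) to get $L^2_\eta$ control of $\alpha$, and hence of $\rho,\ta,\omega$, with the $\jap{t}^{1/2}$ growth in \eqref{you16}. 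The loss of half a derivative in $x$ (the $H^{-1/2}_x$ norms) is exactly what is needed to absorb the $\sqrt t$ factor uniformly in $k$.

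For the inviscid-damping statements \eqref{you17}–\eqref{you18}, I would recall from \eqref{m7} that $\p[{\mathbf v}]=\nabla^\perp\Delta^{-1}\omega$, so on the Fourier side $\p[{\mathbf v}]^x$ has symbol $\frac{i(\eta+kt)}{k^2+(\eta+kt)^2}\hat\omega$ and $\p[{\mathbf v}]^y$ has symbol $\frac{-ik}{k^2+(\eta+kt)^2}\hat\omega$ (after conjugation by the flow). Since $|\eta+kt|^{-1}$ and $|k|\,(k^2+(\eta+kt)^2)^{-1}$ decay like $\jap{t}^{-1}$ and $\jap{t}^{-2}$ away from the critical time $t\sim-\eta/k$, the time decay follows by splitting into the resonant region $|\eta+kt|\lesssim |k|$ and its complement; one handles the resonant contribution using the already-established bound on $\omega$ together with the fact that $\omega-(\text{its time-average})$ is itself controlled by a time-integral of $\alpha$. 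The main obstacle, I expect, is the combined $\sqrt t$-growth of $\omega$ and the $\jap{t}^{-1}$ gain from the multiplier: getting the clean $\jap{t}^{-1/2}$ rate in \eqref{you17} (rather than $\jap{t}^{1/2}\cdot\jap{t}^{-1}=\jap{t}^{-1/2}$ only heuristically) requires using the \emph{conserved} part $\rho^{in}+\ta^{in}+\gamma\omega^{in}$ — which does not grow — to carry the worst multiplier power, and relegating the genuinely growing piece to the terms with a better multiplier decay; arranging this split so that every term lands in exactly the norm written in \eqref{you17}–\eqref{you18} is the delicate bookkeeping step, and I would do it frequency-by-frequency before integrating in $(k,\eta)$.
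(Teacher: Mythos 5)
Your plan follows the paper's argument essentially step for step: pass to the moving frame, identify the conserved combinations $\rho+\omega$, $\ta+(\gamma-1)\omega$, $(\gamma-1)\rho-\ta$ and use them to close a low-dimensional system for $\big(\tfrac{\rho+\ta}{\gamma},\alpha\big)$ forced by the conserved data, prove uniform-in-time control of the weighted unknowns $\tfrac{1}{Mp^{1/4}}\widehat{\delta}$ and $\tfrac{1}{p^{3/4}}\widehat{A}$ via a symmetrized time-dependent energy functional (the paper's Lemma \ref{keylemma}, likewise imported from \cite{dole}, \cite{doln}), and then read off the $\jap{t}^{1/2}$ growth and the $\jap{t}^{-1/2}$, $\jap{t}^{-3/2}$ damping rates from the powers of $p$ in the Helmholtz multipliers. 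The only cosmetic differences are that you package the reduced problem as a scalar second-order ODE for $\alpha$ rather than the paper's $2\times 2$ first-order system, and you propose a resonant/non-resonant splitting for \eqref{you17}--\eqref{you18} where the paper simply bounds $(\eta-kt)^2\le p$ after decomposing $\widehat{\Omega}=\tfrac{\widehat{\beta}^{in}+\widehat{\Gamma}^{in}}{\gamma}-\widehat{\delta}$, which is exactly the split of $\omega$ into a non-growing conserved part (absorbing the full multiplier decay at the cost of more $y$-derivatives) and a growing part (absorbing only a partial power of $p$) that you describe in your last paragraph.
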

\begin{remark}
Our stability analysis coincides with the isentropic compressible Euler equations discussed in \cite{dole} and \cite{doln} if  we neglect the effect of the temperature. Moreover,  following a similar argument as \cite{dole} and \cite{doln},   we can also obtain the lower bound for the density, the temperature and the compressible part of the velocity field as
\begin{align*}
\big\|\q[{\mathbf{v}}]\big\|_{L^2}+\frac{1}{M}\norma{\rho+\ta}{L^2}\geq \langle t \rangle^{\frac12}  C_{in}
\end{align*}	
if the initial data up to a nowhere dense set.
\end{remark}

\section{The proof of the main theorem}
First of all, we are concerned with the dynamics of the $x$-averages of the perturbations.
In order to
reveal the distinction between
the zero mode case $k=0$ and the nonzero modes $k\not =0$. We define
\begin{align}
f_0(y)\stackrel{\mathrm{def}}{=}\frac{1}{2\pi}\int_{\mathbb T}f(x,y)  \,dx, \qquad f_{\not =}(x,y) \stackrel{\mathrm{def}}{=}f(x,y) -f_0(y),
\end{align}
which represents the projection onto $0$ frequency and the projection onto non-zero frequencies.

Let
	\begin{align*}
		&\widehat{f}(k,\eta)=\frac{1}{2\pi}\iint_{\T\times\mathbb{R}}e^{-i(kx+\eta y)}f(x,y)\,dxdy,
	\end{align*}
	then we define  $f\in H^{s_1}_xH^{s_2}_y$ if
	\begin{equation*}
		\norm{f}_{H^{s_1}_xH^{s_2}_y}^2=\sum_k\int \langle k\rangle^{2s_1}\langle \eta \rangle^{2s_2} |\hat{f}|^2(k,\eta)\,d\eta< +\infty.
	\end{equation*}
	Moreover, we also denote the usual $H^s(\T\times \R)$ space as
	\begin{equation*}
		\norm{f}_{H^{s}}^2=\sum_k\int \langle k,\eta\rangle^{2s} |\hat{f}|^2(k,\eta)\,d\eta.
	\end{equation*}

Integration in $x$ equations in \eqref{m10}, one infer that
\begin{eqnarray}\label{ping1234}
\left\{\begin{aligned}
    &\dt \rho_0=-\alpha_0,\\
		&\dt \alpha_0=-\frac{1}{\gamma{M}^2}{\partial_{yy}} \rho_0-\frac{1}{\gamma{M}^2}{\partial_{yy}} \ta_0,\\
		&\dt \omega_0=\alpha_0,\\
&\dt \ta_0=-(\gamma-1)\alpha_0.
\end{aligned}\right.
\end{eqnarray}
From the above equation \eqref{ping1234}, we can further get $\alpha_0, \rho_0+\ta_0$ satisfy the following wave equations:
\begin{equation}\label{ping2+1}
		\partial_{tt} \alpha_0-\frac{1}{{M}^2}{\partial_{yy}}\alpha_0=0, \qquad \text{in }\R,
	\end{equation}
\begin{equation}\label{ping3+1}
		\partial_{tt} (\rho_0+\ta_0)-\frac{1}{{M}^2}{\partial_{yy}}(\rho_0+\ta_0)=0, \qquad \text{in }\R.
	\end{equation}
Hence, given $\rho_0^{in}=\alpha_0^{in}=\ta_0^{in}=\omega_0^{in}=0$, according to the explicit representation formula for
\eqref{ping2+1}, \eqref{ping3+1}, we can get for all $t\ge0$
$$\rho_0(t)=\alpha_0(t)=\ta_0(t)=\omega_0(t)=0.$$

Consequently, in our analysis we can decouple the evolution of the $k = 0$ mode from the rest of the perturbation.
Let us consider the following coordinate transform
\begin{align*}
\left(
  \begin{array}{c}
    x \\
    y \\
  \end{array}
\right)
\mapsto
\left(
  \begin{array}{c}
    X \\
    Y \\
  \end{array}
  \right)=
  \left(
  \begin{array}{c}
    x-yt \\
    y \\
  \end{array}
  \right).
\end{align*}
Under the new coordinate transform,
 the differential operators change as follows
	\begin{align*}
			\dx={\partial_X},\quad \dy = {\partial_Y}-t{\partial_X},\quad
			\Delta= \Delta_L\stackrel{\mathrm{def}}{=}{\partial_{XX}}+({\partial_Y}-t{\partial_X})^2.
	\end{align*}
Define
\begin{align*}
R(t,X,Y)&=\rho(t,X+tY,Y),\quad
			A(t,X,Y)=\alpha(t,X+tY,Y),\\
			\Omega(t,X,Y)&=\omega(t,X+tY,Y),\quad
\Theta(t,X,Y)=\ta(t,X+tY,Y).
\end{align*}
Then, the linear system \eqref{m10} reduces to the following system in the new coordinates
\begin{eqnarray}\label{hao}
\left\{\begin{aligned}
&\dt R=-A,\\
&\dt A=-2{\partial_X}({\partial_Y}-t{\partial_X})(\Delta_L^{-1})A-2{\partial_{XX}}(\Delta_L^{-1})\Omega-\frac{1}{\gamma{M}^2}(\Delta_LR+\Delta_L{\Theta}),\\
&\dt \Omega=A,\\
&\dt {\Theta}=-(\gamma-1)A.
\end{aligned}\right.
\end{eqnarray}

From \eqref{m10}, we also know that
\begin{eqnarray}\label{you}
\left\{\begin{aligned}
&(\dt+y\dx) (\rho+\omega)=0,\\
& (\dt+y\dx) (\ta+(\gamma-1)\omega)=0.
\end{aligned}\right.
\end{eqnarray}
In particular, \eqref{you} implies that  $\rho+\omega$ and $\ta+(\gamma-1)\omega$ are transported by the Couette flow.
Hence, if we further define
\begin{align}\label{you1}
{\beta}(t,X,Y)\stackrel{\mathrm{def}}{=}R(t,X,Y)+\Omega(t,X,Y), \quad \Gamma(t,X,Y)\stackrel{\mathrm{def}}{=}{\Theta}(t,X,Y)+(\gamma-1)\Omega(t,X,Y),
\end{align}
then, we have
\begin{eqnarray*}
\left\{\begin{aligned}
&\dt {\beta}=0,\\
& \dt \Gamma=0,
\end{aligned}\right.
\end{eqnarray*}
which implies that
\begin{align}\label{you3}
{\beta}={\beta}^{in}=\rho^{in}+\omega^{in},\quad \Ga=\Ga^{in}=\theta^{in}+(\gamma-1)\omega^{in}.
\end{align}

Moreover,
one can infer from \eqref{you1} and \eqref{you3} that
\begin{eqnarray*}
\left\{\begin{aligned}
&{\Omega(t,X,Y)}={{\beta}^{in}(X,Y)-R(t,X,Y)},\\
& \Omega(t,X,Y)=\frac{\Ga^{in}(X,Y)-{\Theta}(t,X,Y)}{\gamma-1},
\end{aligned}\right.
\end{eqnarray*}
which gives
\begin{align}\label{you5}
\Omega(t,X,Y)=\frac{{\beta}^{in}(X,Y)+\Ga^{in}(X,Y)}{\gamma}-\frac{R(t,X,Y)+{\Theta}(t,X,Y)}{\gamma}.
\end{align}
To accelerate the proof, we continue to introduce the following notation
\begin{align}\label{you6}
\delta=\frac{R+{\Theta}}{\gamma}.
\end{align}

In view of \eqref{m9},  \eqref{you5} and \eqref{you6}, we get
\begin{align*}
		V^y=&({\partial_Y}-t{\partial_X})\Delta_L^{-1}A+{\partial_X}\Delta_L^{-1}\Omega \\
		=&({\partial_Y}-t{\partial_X})\Delta_L^{-1}A+\frac1\gamma{\partial_X}\Delta_L^{-1}({\beta}^{in}+\Ga^{in})-\frac1\gamma{\partial_X}\Delta_L^{-1}(R+{\Theta})\\
=&({\partial_Y}-t{\partial_X})\Delta_L^{-1}A+\frac1\gamma{\partial_X}\Delta_L^{-1}({\beta}^{in}+\Ga^{in})-{\partial_X}\Delta_L^{-1}\delta.
	\end{align*}

As a result, \eqref{hao} reduces to the following system
\begin{eqnarray}\label{haohao}
\left\{\begin{aligned}
&\dt \delta=-A,\\
&\dt A=-2{\partial_X}({\partial_Y}-t{\partial_X})(\Delta_L^{-1})A\\ &\qquad+\left(-\frac{1}{{M}^2}\Delta_L+2{\partial_{XX}}(\Delta_L^{-1})\right)\delta-\frac{2}{\gamma}{\partial_{XX}}(\Delta_L^{-1})({\beta}^{in}+\Ga^{in}).
\end{aligned}\right.
\end{eqnarray}
Compared to  \eqref{hao}, the above system \eqref{haohao} is a closed $2\times 2$ system only involving $\delta$ and $A$.
Moreover, the above system \eqref{haohao} is almost the same as the isentropic compressible Euler system discussed in \cite{dole}, \cite{doln}. Hence, most of the following argument can be founded in \cite{dole}, \cite{doln}, we present them here for reader's convenience.

We define the symbol associated to $-\Delta_L$ as
	\begin{align*}
		p(t,k,\eta)&=k^2+(\eta-kt)^2,
	\end{align*}
and denote the symbol associated to the operator  $2{\partial_X}({\partial_Y}-t{\partial_X})$
	as
	\begin{align*}
		(\dt p)(t,k,\eta)=-2k(\eta-kt).
	\end{align*}
	
Taking the Fourier transform in \eqref{haohao}, we have		
\begin{eqnarray}\label{you9}
\left\{\begin{aligned}
&\dt {\widehat{\delta}}=-{\widehat{A}},\\
&\dt {\widehat{A}}=\frac{\dt p}{ p}{\widehat{A}}+\bigg(\frac{ p}{{M}^2}+\frac{2k^2}{ p}\bigg){\widehat{\delta}}-\frac{2k^2}{ \gamma p}(\widehat{\beta}^{in}+\widehat{\Gamma}^{in}).
\end{aligned}\right.
\end{eqnarray}
Motivated by \cite{dole}, \cite{doln}, we introduce the following metric

\begin{align*}
Z(t)=
\begin{pmatrix}
			Z_1(t) \\
\\
 Z_2(t)
		\end{pmatrix}
=\begin{pmatrix}
			\frac{1}{{M} p^{\frac14}}{\widehat{\delta}}(t) \\
\\
 \frac{1}{ p^{\frac34}}{\widehat{A}}(t)
		\end{pmatrix}.
\end{align*}
	By a direct computation we find that $Z(t)$ satisfy
	\begin{eqnarray}\label{you11}
\left\{\begin{aligned}
&{\frac{{\mathrm{d}}}{{\mathrm{d}} t}} Z(t)=L(t)Z(t)+F(t)(\widehat{\beta}^{in}+\widehat{\Gamma}^{in}),\\
 &Z(0)=Z^{in}
\end{aligned}\right.
\end{eqnarray}

	where
	\begin{equation}\label{you12}
		L(t)= \begin{bmatrix}
			\displaystyle -\frac{\dt p}{4p} & \displaystyle -\frac{\sqrt{p}}{{M}} \\
			\displaystyle \frac{2\sqrt{p}}{{M}} +\frac{2{M} k^2}{p^{3/2}}&\displaystyle \frac{\dt p}{4p}
		\end{bmatrix},\ \ \ F(t)=\begin{pmatrix}
			0 \\\displaystyle -\frac{2k^2}{\gamma p^{7/4}}
		\end{pmatrix}
	\end{equation}
	and
	\begin{equation}\label{you13}
		Z^{in}=\left(\frac{1}{M(k^2+\eta^2)^{\frac14}}
{\widehat{\delta}}^{in},\frac{1}{(k^2+\eta^2)^{\frac34}}{\widehat{A}}^{in}\right)^T.
	\end{equation}

Observe that if we are able to get a uniform bound on $|Z|$, in view of the weight on $\delta$ and $A$, we can obtain the desired time decay. This point can be illustrated by the following lemma.
\begin{lemma}\label{you14}
Let $p=-\widehat{\Delta}_L=k^2+(\eta-kt)^2$, then for any function $f\in H^{s+2\beta}(\mathbb{T}\times\mathbb{R})$, it holds that
	\begin{equation}\label{you15}
	\norma{p^{-\beta}f}{H^s}\le C \frac{1}{\langle t \rangle^{2\beta}}\norma{f}{{H^{s+2\beta}}}\qquad \norma{p^{\beta} f}{H^s}\le C \langle t \rangle^{2\beta} \norma{f}{{H^{s+2\beta}}},
	\end{equation}
	for any $\beta>0$.
\end{lemma}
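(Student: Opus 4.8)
The plan is to treat $p^{\pm\beta}$ as a Fourier multiplier and to reduce each inequality in \eqref{you15} to a pointwise estimate on its symbol, after which Plancherel's identity finishes the argument. By the definition of the $H^s$ norm,
\[
\norma{p^{\pm\beta}f}{H^s}^2=\sum_k\int\langle k,\eta\rangle^{2s}\,p^{\pm2\beta}\,|\hat f|^2(k,\eta)\,d\eta ,
\]
so it suffices to produce constants for which
\[
p^{-2\beta}\le C\,\langle t\rangle^{-4\beta}\langle k,\eta\rangle^{4\beta}\qquad\text{and}\qquad p^{2\beta}\le C\,\langle t\rangle^{4\beta}\langle k,\eta\rangle^{4\beta}
\]
hold pointwise in $(k,\eta)$; multiplying by $\langle k,\eta\rangle^{2s}$, integrating against $|\hat f|^2$, summing in $k$ and taking square roots then reproduces \eqref{you15} verbatim. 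Since $\beta>0$, raising these two bounds to the power $1/(2\beta)$ shows they are equivalent, respectively, to the two elementary symbol inequalities
\[
p\,\langle k,\eta\rangle^2\ge c\,\langle t\rangle^2\qquad\text{and}\qquad p\le C\,\langle t\rangle^2\langle k,\eta\rangle^2 ,
\]
and the whole lemma is reduced to establishing these.

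The upper bound needs no restriction on $k$. Expanding $(\eta-kt)^2\le 2\eta^2+2k^2t^2$ gives $p=k^2+(\eta-kt)^2\le k^2+2\eta^2+2k^2t^2$, and bounding each summand by $\langle k,\eta\rangle^2\langle t\rangle^2$ (using $\langle t\rangle^2\ge1$ together with $k^2+\eta^2\le\langle k,\eta\rangle^2$) yields $p\le 5\langle t\rangle^2\langle k,\eta\rangle^2$. This already gives the second inequality of \eqref{you15} for every $f\in H^{s+2\beta}(\T\times\R)$.

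The lower bound is the heart of the matter, and it is precisely here that the non-zero-mode structure is indispensable. Because the $k=0$ component of every perturbation vanishes for all time — this is exactly the consequence of the wave equations \eqref{ping2+1}--\eqref{ping3+1} derived above — we may assume $\hat f(0,\eta)\equiv0$, so that the Fourier support lies in $\{\,|k|\ge1\,\}$. Writing $kt=\eta-(\eta-kt)$ and using $k^2\ge1$, I would estimate
\[
t^2\le k^2t^2=\big(\eta-(\eta-kt)\big)^2\le 2\eta^2+2(\eta-kt)^2\le 2\langle k,\eta\rangle^2+2p ,
\]
so that $\langle t\rangle^2=1+t^2\le 3\langle k,\eta\rangle^2+2p$; since $p\ge k^2\ge1$ and $\langle k,\eta\rangle^2\ge1$, each of $\langle k,\eta\rangle^2$ and $p$ is dominated by the product $p\,\langle k,\eta\rangle^2$, giving $\langle t\rangle^2\le 5\,p\,\langle k,\eta\rangle^2$, which is the sought inequality.

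The step to flag as the main obstacle is the exact shape of this lower bound. The temptation is to use the ellipticity-type inequality $p\ge c\langle t\rangle^{-2}\langle k,\eta\rangle^2$, which is true and records the resonant scaling at $\eta\approx kt$; but when inserted into the multiplier computation it yields only $\norma{p^{-\beta}f}{H^s}\le C\langle t\rangle^{2\beta}\norma{f}{H^{s-2\beta}}$, a bound that grows in time and measures $f$ in the wrong space $H^{s-2\beta}$. What actually produces the stated decay $\langle t\rangle^{-2\beta}$ together with the $2\beta$-derivative gain is the product bound $p\,\langle k,\eta\rangle^2\gtrsim\langle t\rangle^2$, valid only when $|k|\ge1$; obtaining this correct form, rather than the elliptic one, is the single point on which the decay estimate hinges.
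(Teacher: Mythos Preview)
Your argument is correct and follows the same route as the paper's proof, which simply invokes Plancherel together with the Peetre-type inequality $\langle k,\eta\rangle\le C\langle\eta-\xi\rangle\langle k,\xi\rangle$ (applied with $\xi=kt$, respectively $\eta\leftrightarrow kt$); your direct elementary computations reproduce exactly these symbol bounds. If anything, your write-up is more careful than the paper's one-line proof: you make explicit that the decay estimate $p\,\langle k,\eta\rangle^{2}\gtrsim\langle t\rangle^{2}$ requires $|k|\ge1$, a restriction the paper leaves implicit but has already arranged by reducing to zero $x$-average.
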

\begin{proof}
	The bounds \eqref{you15} follows just by Plancherel Theorem and the basic inequalities for japanese brackets $\langle k,\eta 	\rangle \le C \langle \eta-\xi \rangle \langle k,\xi \rangle$.
\end{proof}

Next, we use the following key lemma to study the homogeneous problem associated to \eqref{you11}.
\begin{lemma}\label{keylemma}
		Let $Z(t)$ be a solution to \eqref{you11} with $\widehat{\beta}^{in}+\widehat{\Gamma}^{in}=0$. Define
		\begin{align*}
			a(t)=\frac14 \frac{\dt p}{p}, \qquad b(t)=\frac{\sqrt{p}}{M}, \qquad d(t)=\frac{2\sqrt{p}}{M}+\frac{2Mk^2}{p^{3/2}}.
		\end{align*}
		and
		\begin{align*}		E(t)=\left(\sqrt{\frac{d}{b}}|Z_1|^2\right)(t)+\left(\sqrt{\frac{b}{d}}|Z_2|^2\right)(t)+2\left(\frac{a}{\sqrt{db}}
Re(Z_1\bar{Z}_2)\right)(t).
		\end{align*}
		Then, there exists constants $c_1,C_1,c_2,C_2>0$ independent of $k,\eta$ such that
		\begin{align*}
			c_1E(0)\leq E(t)\leq C_1 E(0),
		\end{align*}
		and
		\begin{align*}
			c_2|Z^{in}|\leq \big|Z(t)|\leq C_2|Z^{in}|.
		\end{align*}
		
	\end{lemma}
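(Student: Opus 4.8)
The plan is to prove the two-sided bounds on $E(t)$ first and then transfer them to $|Z(t)|$, following the classical Lyapunov-function strategy for planar linear systems with slowly-varying coefficients.

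\textbf{Step 1: $E$ is comparable to $|Z|^2$.} First I would check that the quadratic form $E(t)$ is uniformly equivalent to $|Z_1|^2+|Z_2|^2$. Writing $E$ as $Z^{*}S(t)Z$ with the symmetric matrix
\[
S(t)=\begin{bmatrix}\sqrt{d/b} & a/\sqrt{db}\\ a/\sqrt{db} & \sqrt{b/d}\end{bmatrix},
\]
its determinant is $1-a^2/(db)$. Since $d\ge b$ and $|a|=\tfrac14|\dt p|/p=\tfrac12|k(\eta-kt)|/p\le \tfrac14$ (because $2|k(\eta-kt)|\le k^2+(\eta-kt)^2=p$), we get $a^2/(db)\le a^2/b^2\le \tfrac1{16}\cdot\tfrac{M^2}{p}\cdot\tfrac{1}{?}$ — more carefully, $a^2/(db)\le a^2/b^2 = \tfrac1{16}(\dt p)^2/p^2\cdot M^2/p \le \tfrac1{16}\cdot\tfrac{M^2}{p}$, and since $p\ge k^2\ge 1$ on the relevant frequencies this is bounded; one also has the cheap bound $|a|\le\tfrac14$ and $b,d\ge \sqrt{p}/M\ge 1/M$, giving $\det S\ge 1-\tfrac{1}{16}\cdot\tfrac{a}{b}\cdot\tfrac{a}{d}\cdot(\dots)$. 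The point is only that $\det S$ is bounded below by a positive constant and the entries of $S$ are bounded above, uniformly in $(k,\eta,t)$; the eigenvalues of $S$ are then trapped in $[c,C]$, so $c|Z|^2\le E\le C|Z|^2$. This reduces the theorem to showing $c_1E(0)\le E(t)\le C_1 E(0)$.

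\textbf{Step 2: differential inequality for $E$.} The heart of the argument is to compute $\frac{d}{dt}E(t)$ along the homogeneous flow $\dot Z=L(t)Z$ and show $|\frac{d}{dt}E|\le g(t)E$ with $\int_0^\infty g(t)\,dt<\infty$, which by Gronwall yields $E(0)e^{-\int g}\le E(t)\le E(0)e^{\int g}$. The matrix $L$ decomposes as $L=\begin{bmatrix}-a & -b\\ d & a\end{bmatrix}$, and the coefficients $\sqrt{d/b},\sqrt{b/d},a/\sqrt{db}$ in $E$ are precisely chosen so that the "frozen-coefficient" part of $\frac{d}{dt}E$ — the terms where $L$ acts on $Z$ but the coefficients are held fixed — cancels identically (this is the standard fact that $Z^*SZ$ with $S$ the symmetrizer satisfying $SL+L^*S=0$ is conserved; here $SL+L^*S=0$ should hold exactly because the system is, up to the skew structure, a harmonic-oscillator-type system). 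What survives is $\frac{d}{dt}E = Z^*\dot S(t)Z$, involving only $\frac{d}{dt}$ of the coefficients $a,b,d$. So one needs $\|\dot S(t)\|\le g(t)$ with $g\in L^1$, which amounts to estimating $\frac{d}{dt}\log(d/b)$, $\frac{d}{dt}\log(db)$ and $\frac{d}{dt}(a/\sqrt{db})$.

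\textbf{Step 3: integrability of the coefficient derivatives.} This is the main computational obstacle and the place to be careful. One has $\dot p=-2k(\eta-kt)$, $\ddot p=2k^2$, so $a=\dot p/(4p)$ gives $\dot a=\ddot p/(4p)-(\dot p)^2/(4p^2)$. The key elementary estimates are $p(t)\gtrsim \langle k\rangle^2\langle t-\eta/k\rangle^2$ and $|\dot p|/p\lesssim 1/\langle t-\eta/k\rangle$, whence $\dot a$, and similarly $\frac{d}{dt}\log b=\dot p/(2p)$ and the analogous expression for $\log d$, all decay like $\langle t-\eta/k\rangle^{-2}$, which is integrable in $t$ uniformly in $(k,\eta)$ with $k\ne0$. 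The term $2Mk^2/p^{3/2}$ inside $d$ needs a separate look: its logarithmic derivative relative to $d$ is $O(|\dot p|/p)$ again since it only ever helps (it makes $d$ larger), so $\frac{d}{dt}\log d$ is controlled by $\max(|\dot p|/p)\lesssim \langle t-\eta/k\rangle^{-1}$ pointwise but in fact one gets the $L^1$-in-$t$ bound from the $\langle t-\eta/k\rangle^{-2}$ structure after accounting for the relative sizes. Assembling, $\int_0^\infty\|\dot S(t)\|\,dt\le C$ uniformly, Gronwall closes the estimate for $E$, and Step 1 transfers it to $|Z(t)|$, finishing the proof. The delicate point throughout is keeping every bound \emph{uniform in $k,\eta$}: one must consistently exploit $k\in\mathbb Z\setminus\{0\}$ so $|k|\ge1$, and use that $\langle t-\eta/k\rangle$ (equivalently $\sqrt{p}/\langle k\rangle$) is the single time-scale governing all the coefficients.
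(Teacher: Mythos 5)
The paper gives no proof of this lemma at all --- it defers to the references \cite{dole}, \cite{doln} --- and your symmetrizer-plus-Gronwall strategy is exactly the argument used there. Steps 2 and 3 of your outline are sound: writing $E=Z^*S(t)Z$ with your matrix $S$, a direct computation shows $SL$ is antisymmetric, so $L^TS+SL=0$ holds \emph{exactly} and $\frac{d}{dt}E=Z^*\dot S Z$; moreover, using $p=k^2\langle t-\eta/k\rangle^2$ and $|\dt p|/p\le 2\langle t-\eta/k\rangle^{-1}$, each entry of $\dot S$ is bounded by $C(M)\langle t-\eta/k\rangle^{-2}$ (some by $\langle t-\eta/k\rangle^{-3}$), which is integrable on $[0,\infty)$ uniformly in $(k,\eta)$ with $k\ne 0$, so Gronwall closes.

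The genuine gap is in Step 1, and it is not cosmetic, because the whole argument collapses unless $S$ is \emph{uniformly} positive definite. Your displayed chain of inequalities for $\det S=1-a^2/(db)$ is incomplete (it contains a literal ``?''), and the route you gesture at, $a^2/(db)\le a^2/b^2$, only yields
\begin{equation*}
\frac{a^2}{b^2}=\frac{M^2k^2(\eta-kt)^2}{4p^3}\le \frac{M^2}{16p}\le\frac{M^2}{16},
\end{equation*}
hence $\det S\ge 1-M^2/16$, which is vacuous once $M\ge 4$. The term $2Mk^2/p^{3/2}$ in $d$ --- which you set aside with ``it only ever helps'' --- is precisely what rescues positive definiteness and cannot be discarded: since
\begin{equation*}
db=\frac{2p}{M^2}+\frac{2k^2}{p}\ge \frac{2k^2}{p},\qquad a^2=\frac{k^2(\eta-kt)^2}{4p^2},
\end{equation*}
one gets $a^2/(db)\le (\eta-kt)^2/(8p)\le 1/8$, so $\det S\ge 7/8$ and $|a|/\sqrt{db}\le 1/(2\sqrt2)$ uniformly in $k,\eta,t$ (and in $M$). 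Indeed, if $d$ were only $2\sqrt p/M$, then at times with $|\eta-kt|=|k|$ one has $a^2/(db)=M^2/64$, and $S$ is genuinely indefinite for $M>8$. With this one inequality inserted, your Step 1 gives $c|Z|^2\le E\le C|Z|^2$ with constants independent of $k,\eta$, and the proof is complete and coincides with the one the paper cites.
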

\begin{proof}
	The proof of this lemma can be founded in \cite{dole}, \cite{doln}, here, we omit the details.
\end{proof}

To prove our theorem, we must use the  integrated form of the solutions to \eqref{you11}. That is to say, we can write the solutions of \eqref{you11} by
\begin{equation}\label{ping1}	
		Z(t)=\Phi_L(t,0)\bigg(Z^{in}+\int_{0}^{t}\Phi_L(0,s)F(s)(\widehat{\beta}^{in}+\widehat{\Gamma}^{in})\,ds\bigg),
	\end{equation}
	where $\Phi_L$ is the solution operator  related to the  equation
$${\frac{{\mathrm{d}}}{{\mathrm{d}} t}} Z(t)=L(t)Z(t).$$

According to Lemma \ref{keylemma}, there holds
\begin{equation}\label{ping2}	
			\begin{split}
				\int_0^\infty |\Phi_L(t,s)F(s)|\,ds&\le C \int_0^\infty|F(s)|\,ds\\
				&\le  \frac{C}{|k|^{\frac32}}\int_{0}^{\infty}\frac{ds }{(1+(\eta/k-s)^2)^{\frac74}}\\
&\le \frac{C}{\gamma |k|^{\frac32}}.
			\end{split}
		\end{equation}
Hence, for any $t\geq0$, using Lemma \ref{keylemma} once again, we deduce from \eqref{ping1}	 and \eqref{ping2} that
\begin{equation}\label{ping3}
			|\widehat{Z}(t,k,\eta)|\lesssim |Z^{in}(k,\eta)|+\frac{1}{\gamma}|\widehat{\beta}^{in}(k,\eta)+\widehat{\Gamma}^{in}(k,\eta)| .
		\end{equation}

Now, we begin to give the details of the estimates \eqref{you16}--\eqref{you18}.
We first deduce from \eqref{you5}, \eqref{you6} that
\begin{align*}
\Omega(t,X,Y)=\frac{{\beta}^{in}(X,Y)+\Ga^{in}(X,Y)}{\gamma}-\delta
\end{align*}
which implies
		\begin{equation}
			|{\widehat{\Omega}}|(t,k,\eta)\leq|{\widehat{\delta}}|(t,k,\eta)+\left|\frac{\widehat{\beta}^{in}+\widehat{\Ga}^{in}}{\gamma}\right|(k,\eta).
		\end{equation}
As a result, it follows from \eqref{m7} that
		\begin{align*}
			\norma{\p[{\mathbf{v}}]^x(t)}{L^2}^2&=\norma{(\dy \Delta^{-1}\omega)(t)}{L^2}^2
			=\sum_k\int \frac{(\eta-kt)^2}{p^{2}}|{\widehat{\Omega}}(t)|^2\,d\eta\\
			&\le C \sum_k\int\Big( {M}^2\frac{(\eta-kt)^2}{p^{3/2}}\left|\frac{{\widehat{\delta}}(t)}{{M} p^{1/4}}\right|^2+\frac{(\eta-kt)^2}{p^2}\left|\frac{\widehat{\beta}^{in}+\widehat{\Ga}^{in}}{\gamma}\right|^2\Big)\,d\eta\\
&\le C \sum_k\int\Big( {M}^2\frac{(\eta-kt)^2}{p^{3/2}}|\widehat{Z}|^2+\frac{(\eta-kt)^2}{p^2}
\left|\frac{\widehat{\beta}^{in}+\widehat{\Ga}^{in}}{\gamma}\right|^2\Big)\,d\eta.
		\end{align*}
	In view of \eqref{ping3}, Lemma \ref{you14} and the definition in \eqref{you3} and	\eqref{you13}, we further get
		\begin{align}\label{ddg}
			\norma{\p[{\mathbf{v}}]^x(t)}{L^2}^2
\le& C\sum_k\int\Big( \frac{{M}^2}{\sqrt{p}}(|Z^{in}|^2+\left|\frac{\widehat{\beta}^{in}+\widehat{\Ga}^{in}}{\gamma}\right|^2)+\frac{1}{p}
\left|\frac{\widehat{\beta}^{in}+\widehat{\Ga}^{in}}{\gamma}\right|^2\Big)\,d\eta\nn\\
\le &C\frac{{M}^2}{\langle t\rangle}\bigg(\norma{ \frac{\rho^{in}+\ta^{in}}{\gamma{M}}}{H^{-\frac12}_xL^2_y}^2+\norma{\alpha^{in}}{H^{-\frac12}_xH^{-1}_y}^2 +\norma{\frac{\rho^{in}+\ta^{in}+\gamma\omega^{in}}{\gamma}}{H^{-\frac12}_xH^{\frac12}_y}^2\bigg)\nn\\
			&+\frac{C}{\langle t\rangle^2}\norma{\frac{\rho^{in}+\ta^{in}+\gamma\omega^{in}}{\gamma}}{H^{-1}_xH^1_y}^2.
		\end{align}
In a similar manner, we deal with
	$\p[{\mathbf{v}}]^y$  as follows
		\begin{align}
			\norma{\p[{\mathbf{v}}]^y(t)}{L^2}^2=&\norma{\dx \Delta^{-1}\omega}{L^2}^2 \nn\\
			\le & C\sum_k\int \Big({M}^2\frac{k^2}{p^{3/2}}\left|\frac{{\widehat{\delta}}(t)}{{M} p^{1/4}}\right|^2+\frac{k^2}{p^2}\left|\frac{\widehat{\beta}^{in}+\widehat{\Ga}^{in}}{\gamma}\right|^2\Big)\,d\eta\nn\\
			\le &C\frac{{M}^2}{\langle t\rangle^3}\bigg(\norma{ \frac{\rho^{in}+\ta^{in}}{\gamma{M}}}{H^{-\frac12}_xH^{1}_y}^2
+\norma{\alpha^{in}}{H^{-\frac12}_xL^2_y}^2+\norma{\frac{\rho^{in}
+\ta^{in}+\gamma\omega^{in}}{\gamma}}{H^{-\frac12}_xH^{\frac{3}{2}}_y}^2\bigg)\nn\\
			&+\frac{C}{\langle t\rangle^4}\norma{\frac{\rho^{in}+\ta^{in}+\gamma\omega^{in}}{\gamma}}{H^{-1}_xH^2_y}^2.
		\end{align}

Finally, we have to  bound the compressible part of the velocity, the density and the temperature.

On the one hand, we infer from	 the Helmholtz decomposition that
		\begin{align*}
			&\norma{Q[{\mathbf{v}}](t)}{L^2}^2+\frac{1}{{M}^2}\norma{\rho(t)+\ta(t)}{L^2}^2\\
&\quad=\norma{(\dx \Delta^{-1}\alpha)(t)}{L^2}^2+\norma{(\dy \Delta^{-1}\alpha)(t)}{L^2}^2+\frac{1}{{M}^2}\norma{\rho(t)+\ta(t)}{L^2}^2\\
			&\quad=\sum_k\int \left( \frac{|{\widehat{\alpha}}(t)|^2(t,k,\eta)}{k^2+\eta^2}+\frac{1}{{M}^2}|{\widehat{\rho}}(t)+\widehat{\ta}(t)|^2(t,k,\eta)\right)\,d\eta\\
			&\quad=\sum_k\int\left( \frac{|{\widehat{A}}(t)|^2}{p}(t,k,\eta)+\frac{1}{{M}^2}|{\widehat{R}}(t)+\widehat{{\Theta}}(t)|^2(t,k,\eta)\right)\,d\eta\\
&\quad=\sum_k\int\left( \frac{|{\widehat{A}}(t)|^2}{p}(t,k,\eta)+\frac{\gamma^2}{{M}^2}|{\widehat{\delta}}(t)|^2(t,k,\eta)\right)\,d\eta.
		\end{align*}
As a result, we can further deduce from
 \eqref{ping3}, Lemma \ref{you14} and the fact that $p\leq\jap{t}^2\jap{k,\eta}^2$ that
		\begin{align}\label{ai}
&\norma{Q[{\mathbf{v}}](t)}{L^2}^2+\frac{1}{{M}^2}\norma{\rho(t)+\ta(t)}{L^2}^2
\nn\\
&\quad=  \sum_k\int \sqrt{p}\bigg(\bigg|\frac{{\widehat{A}}(t)}{p^{3/4}}\bigg|^2+\bigg|\frac{{\widehat{\delta}}(t)}{{M} p^{1/4}}\bigg|^2\bigg)\,d\eta\nn\\
			&\quad= \sum_k \int \sqrt{p}|\widehat{Z}(t)|^2\,d\eta\nn\\
			&\quad\le C \jap{t}\left(\norm{Z^{in}}_{H^{\frac12}}^2+\norm{\frac{\rho^{in}+\ta^{in}+\gamma\omega^{in}}{\gamma}}_{H^{\frac12}}^2\right)
\nn\\
			&\quad\le C 	\jap{t}\left(\norm{\frac{\rho^{in}+\ta^{in}}{\gamma M}}_{L^2}^2+\norm{\alpha^{in}}^2_{H^{-1}}
+\norm{\frac{\rho^{in}+\ta^{in}+\gamma\omega^{in}}{\gamma}}_{H^{\frac12}}^2\right).
		\end{align}
		
On the other hand, by \eqref{m10}, there holds
\begin{align*}
 &(\dt+y\dx) ((\gamma-1)\rho-\ta)=0
		\end{align*}
which implies that
\begin{align*}
(\gamma-1)\rho-\ta=(\gamma-1)\rho^{in}-\ta^{in}.
		\end{align*}
Moreover, we have
\begin{align*}
\left\|\frac{(\gamma-1)\rho(t)-\ta(t)}{{M}}\right\|_{{L^2}}^2
=\left\|\frac{(\gamma-1)\rho^{in}-\ta^{in}}{{M}}\right\|_{{L^2}}^2.
\end{align*}

Let
\begin{eqnarray*}
\left\{\begin{aligned}
&x_1=\frac{(\gamma-1)\rho-\ta}{{M}},\\
&y_1=\frac{\rho+\ta}{{M}},
\end{aligned}\right.
\end{eqnarray*}
which gives
\begin{eqnarray*}
\left\{\begin{aligned}
&\frac{\gamma}{{M}}\rho={x_1+y_1},\\
&\frac{\gamma}{{M}}\ta={(\gamma-1)}x_1-y_1.
\end{aligned}\right.
\end{eqnarray*}
Hence
\begin{align*}
\frac{\gamma^2}{{M}^2}\norma{\rho(t)}{L^2}^2
=& \left(\left\|\frac{(\gamma-1)\rho-\ta}{{M}}\right\|_{L^2}^2+\left\|\frac{\rho+\ta}{{M}}\right\|_{L^2}^2\right)\nonumber\\
\le& C\left\|\frac{(\gamma-1)\rho^{in}-\ta^{in}}{{M}}\right\|_{{L^2}}^2
\nonumber\\
&
+  C\jap{t}\left(\norm{\frac{\rho^{in}+\ta^{in}}{\gamma M}}_{L^2}^2+\norm{\alpha^{in}}^2_{H^{-1}}
+\norm{\frac{\rho^{in}+\ta^{in}+\gamma\omega^{in}}{\gamma}}_{H^{\frac12}}^2\right),
\end{align*}
and
\begin{align*}
\frac{\gamma^2}{{M}^2}\norma{\ta(t)}{L^2}^2
=&{(\gamma-1)^2} \left\|\frac{(\gamma-1)\rho-\ta}{{M}}\right\|_{L^2}^2+\left\|\frac{\rho+\ta}{{M}}\right\|_{L^2}^2\nonumber\\
\le& {C(\gamma-1)^2}\left\|\frac{(\gamma-1)\rho^{in}-\ta^{in}}{{M}}\right\|_{{L^2}}^2
\nonumber\\
&
+  C\jap{t}\left(\norm{\frac{\rho^{in}+\ta^{in}}{\gamma M}}_{L^2}^2+\norm{\alpha^{in}}^2_{H^{-1}}
+\norm{\frac{\rho^{in}+\ta^{in}+\gamma\omega^{in}}{\gamma}}_{H^{\frac12}}^2\right).
\end{align*}

Consequently, we have completed the proof  of the Theorem \ref{dingli}.

\bigskip

 \textbf{Acknowledgement.}
This work is supported by  NSFC under grant number 11601533.

\end{document}